\numberwithin{equation}{section}
\newtheorem{theorem}{Theorem}[section]
\newtheorem{lemma}[theorem]{Lemma}
\newtheorem{physical conclusion}[theorem]{Physical Conclusion}
\newtheorem{remark}[theorem]{Remark}
\newtheorem{definition}[theorem]{Definition}
\title{Mathematical Theory for Quantum Phase Transitions}
\author{ Tian Ma\thanks{Email:matian56@sina.com.}\ \ Da-Peng Li\thanks{Email:davidlee1234@163.com.} \ \
 Ruikuan Liu\thanks{Email:liuruikuan2008@163.com. %Supported by NSFC(11401479)
} \ \  Jiayan Yang\thanks{Email:jiayan\_{}1985@163.com;} \ \ 
\\ \footnotesize $^{*,\dag,\ddag,\S}$Department of Mathematics, \footnotesize Sichuan University
 \footnotesize Chengdu, Sichuan 610064, China
 \\ \footnotesize $^\S$Department of Mathematics and Information Technology,~Southwest Medical University \\ \footnotesize Chengdu,
Sichuan 646000, China}
\begin{document}
\date{September 30, 2016}
\maketitle
\begin{abstract}
 Quantum Phase Transition (QPT) is a phase transition between different quantum states by adjusting some control parameters. Based on the Principle of Hamilton Dynamics (PHD) and the Principle of Lagrangian Dynamics (PLD), a general QPT model is established. Also, a definition of QPT is given. The important point is that the QPT model is applied to Bose-Einstein Condensate (BEC) and the corresponding bifurcation solutions (i.e., the quantum states) are obtained by using steady state bifurcation theory.
\begin{center}
\textbf{\normalsize keywords}
\end{center}
Quantum Phase Transition, ~Quantum Hamilton System, ~Steady State Bifurcation Theory, ~Bose-Einstein Condensate.
\end{abstract}

\section{Introduction}
It is well known that Quantum Phase Transition (QPT) is an important topic in condensed matter physics. The QPT is a phase transition between different quantum states by varying some control parameters, such as magnetic field or pressure. In particular, the first experiment related to QPT was performed by Anderson, Ensher, et al. \cite{AE} and Davis, Mewes, et al. \cite{DM} for Bose-Einstein Condensate in 1995. Since then, more quantum phase transitions were discovered including the ferromagnetic transition  in 1996 \cite{BR}, the experiment on ultra cold atoms in optical lattices  in 2002 \cite{GM}, and the experiment on crystals of $CoNb_{2}O_{6}$ by Coldea and collaborators in 2010 \cite{CTW}.

%The physical descriptions of QPT go back to 1924, see \cite{8}, when Bose considered how groups of photons behave, Einstein soon extended Bose¡¯s work to show that at extremely low temperatures ¡°bosonic atoms¡± with even spins would coalesce into a shared quantum state at the lowest available energy. The first atomic BEC was made in 1995, see Anderson M H, Ensher J R, et. al \cite{1} and Davis K B, Mewes M-O, et.al \cite{4}.

Notice that the classical definition of QPT originates from some physical experiments. In general, the main characters of the classical definition of QPT are described in the following three aspects.
\begin{description}
\item~(1)~The QPT occurs near the temperature of absolute zero;
\item~(2)~The QPT is accessed by varying some non-thermal physical parameters such as chemical composition or pressure;
\item~(3)~The QPT occurs as a result of competing quantum states by quantum fluctuations.
\end{description}

In nature, it is known that there are two different kinds of phase transitions including the thermal phase transition and the quantum phase transition (QPT). The thermal phase transition (i.e. equilibrium phase transition) occurs in the dissipative system. In this paper, we consider QPT as the phase transition in an energy conservation quantum system basing on the following properties:
\begin{description}
\item~(1)~QPT is the transition between different quantum states, which are the macroscopic performances caused by collective quantum behavior. As a matter of fact, the quantum state can be considered as the quantum behavior of a single particle or several particles, which can be characterized by a family of wave functions,
\begin{equation}\label{101}
    \psi=(\psi_{1},\psi_{2},\cdots,\psi_{k}),
\end{equation}
  where $\psi$ is a complex function
\begin{align}\label{102}
\psi_j=\psi_j^1+i \psi_{j}^2, \quad{1\leq j\leq k}.
\end{align}
For example, the quantum state is characterized by a single wave function $\psi$ in the scalar Bose-Einstein Condensate, and the quantum state is characterized by three wave functions $\psi=(\psi_{-1},\psi_{0},\psi_{1})$ in the spinor Bose-Einstein Condensate.

\item~(2)~ Based on the quantum mechanics theory, the system is characterized by a Hamiltonian $H(\psi,\lambda)$ with the wave function $\psi$ satisfying the following Sch$\ddot{o}$rdinger equation,
\begin{equation}\label{2003}
 i\hbar\frac{\partial \psi}{\partial t}=\frac{\delta H(\psi,\lambda)}{\delta \psi^{*}},
\end{equation}
where $\hbar$ is the Plank constant, $i$ is the imaginary unit, $\lambda$ is the control parameter, $H(\psi,\lambda)$ is the total energy of the system, $\psi$ and its conjugate $\psi^*$ are given by (\ref{101})-(\ref{102}). Moreover,  $(\ref{2003})$ equals to the following equations by the Quantum Hamilton System Theory in \cite{MW6}.
\begin{eqnarray}\label{2004}
\begin{aligned}
& \frac{\partial \psi^1_{i}}{\partial t}&=& \alpha \frac{\delta H(\psi,\lambda)}{\delta \psi^2_{i}},\\
& \frac{\partial \psi^2_{i}}{\partial t}&=&-\alpha \frac{\delta H(\psi,\lambda)}{\delta \psi^1_{i}}.
\end{aligned}
\end{eqnarray}
where $\alpha$ is a constant. Since $(\ref{2004})$ is the Hamilton equation used to describe an energy-conservation system, we consider QPT as the phase transition in the energy-conservation quantum system,
\end{description}

%The thermal phase transition(equilibrium phase transition) is driven by thermal fluctuations, which result in the decrease of the total energy of the system, hence the thermal phase transition occurs in the dissipative system which is not energy-conservation. However, the QPT occurs in energy-conservation system basing on the above discussion.
The rigorous theory of equilibrium Phase Transition has been established by T. Ma and S. Wang in \cite{MW5}, such as the dynamic phase transition theory in PVT systems in \cite{MW3} and the dynamic phase transitions for ferromagnetic systems in \cite{MW4}. As we know, there is little mathematical theory related to QPT. The main objective of this paper is to establish a general QPT model based on two fundamental physical principles, which are Principle of Hamilton Dynamics (PHD) and Principle of Lagrangian Dynamics (PLD) in \cite{MW6}. Furthermore, a  definition of QPT is given. Due to the mathematical definition in our paper, it is clear to see that the quantum states are the bifurcation solutions in the QPT model. Therefore, the bifurcation theory can be applied to study QPT. Finally, the bifurcation solutions (quantum states) in the scalar BEC are obtained due to steady state bifurcation theory.

This article is organized as follows. In section 2, we will introduce two physical principles: PHD and PLD. We also introduce the steady bifurcation theory, which is a basic mathematical tool to study QPT. Section 3 devotes to establish a general model for QPT based on PHD and PLD, to give a definition for QPT. In section 4, we study the QPT in the scalar BEC and describe some  physical conclusions of QPT in section 5.

\section{Preliminaries}
\subsection{Fundamental  principles of physical  system}

Energy conservation is a universal law in physics, which implies that the Principle of Hamilton Dynamics (PHD) is a universal principle to describe all the conservation physical systems. In other words, the PHD in classical mechanics can be generalized to all energy conservation physical fields, which is shown as follows.

\begin{lemma}(\textbf{Principle of Hamilton Dynamics}~\cite{MW6})\quad  For any conservation physical system,  there are two sets of state functions
\begin{align}\nonumber
u=(u_{1},\cdots, u_{N}),~~and~~v=(v_{1},\cdots,v_{N}),
\end{align}
such the energy density $\mathcal{H}$ is a functional of $u$ and $v$:
\begin{align}\nonumber
\mathcal{H}=\mathcal{H}(u,v,\cdots,D^{m}u,D^{m}v),\quad m\geq 0.
\end{align}
The Hamiltonian (total energy) of the system is
\begin{align}\nonumber
H(u,v)=\int_{\Omega}\mathcal{H}(u,v,\cdots,D^{m}u,D^{m}v)\text{d}x,~~\Omega\subset \mathbb{R}^{3},
\end{align}
provided that the system is described by continuous fields.  Moreover,~the state function $u$ and $v$ satisfy the equations
\begin{eqnarray}\nonumber
\begin{aligned}
  & \frac{\partial u}{\partial t}=\alpha \frac{\delta H}{\delta v},\\
  & \frac{\partial v}{\partial t}=-\alpha \frac{\delta H}{\delta u},
\end{aligned}
\end{eqnarray}
where $\alpha$ is a constant.
\end{lemma}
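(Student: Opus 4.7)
The plan is to derive the continuum Hamilton equations by generalizing the finite-dimensional symplectic structure of classical mechanics to fields, and then to verify that the prescribed dynamics preserve the total energy $H(u,v)$, which is consistent with the conservation hypothesis.

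First, I would recall the finite-dimensional model. Given canonical variables $(q_1,\dots,q_N,p_1,\dots,p_N)$ and a smooth Hamiltonian $H(q,p)$, the equations $\dot q_i=\partial H/\partial p_i$ and $\dot p_i=-\partial H/\partial q_i$ define a Hamiltonian flow, and a direct computation gives $\dot H=0$ along trajectories; the key algebraic structure is the canonical Poisson bracket $\{q_i,p_j\}=\delta_{ij}$. Next, I would pass to the field-theoretic setting by replacing the discrete index $i$ with a continuous label $x\in\Omega\subset\mathbb{R}^3$ and the vectors $(q,p)$ by pointwise field values $(u(x),v(x))$. The partial derivatives $\partial/\partial q_i$ and $\partial/\partial p_i$ are then replaced by the functional (variational) derivatives $\delta/\delta u$ and $\delta/\delta v$ of
\[H(u,v)=\int_{\Omega}\mathcal{H}(u,v,\dots,D^{m}u,D^{m}v)\,dx.\]
When $\mathcal{H}$ depends on derivatives up to order $m$, these variational derivatives are computed by the Euler--Lagrange formula, with boundary contributions absorbed either through decay at infinity or through appropriate boundary conditions on $\partial\Omega$. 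The stated evolution equations are then the natural continuum analogue of the canonical Hamilton equations, with $\alpha$ a dimensional/scale constant.

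Third, I would verify compatibility with energy conservation as a consistency check. Differentiating along any trajectory of the proposed system,
\[\frac{dH}{dt}=\int_{\Omega}\Bigl(\frac{\delta H}{\delta u}\cdot\frac{\partial u}{\partial t}+\frac{\delta H}{\delta v}\cdot\frac{\partial v}{\partial t}\Bigr)\,dx=\alpha\int_{\Omega}\Bigl(\frac{\delta H}{\delta u}\cdot\frac{\delta H}{\delta v}-\frac{\delta H}{\delta v}\cdot\frac{\delta H}{\delta u}\Bigr)\,dx=0,\]
so $H$ is indeed a conserved quantity of the flow, matching the assumption that the system is energy-conserving.

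The main obstacle is the converse direction: showing that \emph{every} conservation physical system actually admits such a canonical pairing $(u,v)$. In infinite dimensions this amounts to producing a Darboux-type chart for a symplectic structure on the field configuration space, which is subtle and not automatic from energy conservation alone. Since the statement is presented as a physical principle attributed to \cite{MW6}, the argument I envisage here is a motivational derivation from the finite-dimensional case together with the compatibility check above, rather than a general existence theorem for canonical variables; in concrete applications (for instance, choosing $u=\psi^1$ and $v=\psi^2$ for the Schr\"odinger system \eqref{2004}), one then verifies on a case-by-case basis that the split into $(u,v)$ reproduces the correct dynamics.
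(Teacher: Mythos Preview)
The paper does not prove this statement at all: Lemma~2.1 is stated as a \emph{physical principle} quoted from \cite{MW6}, with no accompanying argument. It functions as an axiom for the rest of the article, not as a theorem to be established.

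Your write-up is therefore not really comparable to anything in the paper. What you have given is a plausible heuristic derivation---pass from finite-dimensional canonical mechanics to fields by replacing partial derivatives with variational derivatives, and then check that $dH/dt=0$ along the resulting flow. The consistency check is correct and is the standard computation. But, as you yourself note in your final paragraph, the substantive claim in the lemma is the \emph{existence} assertion: that for \emph{any} energy-conserving physical system one can find canonical pairs $(u,v)$ so that the dynamics take this form. Energy conservation alone does not imply a symplectic structure (there are non-Hamiltonian vector fields with a first integral), and producing Darboux-type coordinates in infinite dimensions is a genuine problem, not a formality. So the ``proof'' you propose can only ever be a motivation plus a compatibility check, which is exactly how you end up framing it.

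In short: there is no gap relative to the paper, because the paper offers nothing to compare against; your derivation is a reasonable gloss on why one adopts the principle, and your caveat about the converse direction is the honest place to stop.
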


The Principle of Lagrangian Dynamics in classical mechanical system can be generalized to all physical motion systems, which is stated as follows
\begin{lemma}(\textbf{Principle of Lagrangian Dynamics}~\cite{MW6})
For a physical motion system, there are functions
\begin{align}\nonumber
u=(u_{1},u_{2},\cdots,u_{N}),
\end{align}
which describe the states of this system, and there exists a functional of $u$, given by
\begin{align}\nonumber
L(u)=\int_{\Omega}\mathcal{L}(u,Du,\cdots,D^{m}u)dx,
\end{align}
then the state functions of this system satisfy the variational equation as follows:
\begin{align}\nonumber
\delta L(u)=0.
\end{align}
The functional $L$ is called the Lagrange action, and $\mathcal{L}$ is called the Lagrange density.
\end{lemma}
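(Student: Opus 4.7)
The statement is offered as a physical principle rather than a theorem derived from prior axioms, so the only honest plan is to justify it by exhibiting, for each class of physical motion system, an explicit Lagrange density $\mathcal{L}$ on configurations $u$ such that the empirically established equations of motion coincide with $\delta L(u)=0$. I would make no attempt to deduce the principle from more primitive postulates, since the paper supplies no such ambient axiom system and openly labels the statement as a ``principle''. The cited reference \cite{MW6} carries the full case-by-case argument, and my plan is essentially to reproduce enough of that catalogue to establish the claim in the generality needed by the later sections.

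First, I would delimit the scope of ``physical motion system'' to non-dissipative systems describable by a finite collection of field variables $u=(u_1,\dots,u_N)$ on a spatial domain $\Omega\subset\mathbb{R}^{3}$ together with a time coordinate, which is precisely the class on which covariance and energy conservation are expected to hold. Next, I would verify the principle on the canonical examples that span the paper's physical ambitions: (i) finite-dimensional classical mechanics with $\mathcal{L}=T-V$; (ii) scalar, vector, and spinor field theories (Klein--Gordon, Maxwell, Dirac, Yang--Mills) with their standard covariant Lagrangian densities; and most importantly (iii) the Schr\"odinger/Gross--Pitaevskii density of the form $\mathcal{L}=\tfrac{i\hbar}{2}(\psi^{*}\partial_{t}\psi-\psi\partial_{t}\psi^{*})-\mathcal{H}(\psi,\psi^{*},\nabla\psi)$ that will govern the BEC application of Section~4. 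In each case the content of the verification is just: write $\mathcal{L}$, observe that $\delta L(u)=0$ reproduces the textbook equation of motion. This is evidence for the principle, not a derivation of it.

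The genuine obstacle is conceptual: the lemma asserts that \emph{every} physical motion system admits such an $L$, and no finite verification can establish a universal quantifier over ``all of physics''. My plan handles this by making the case list exhaustive for the systems actually used in the paper (classical, electromagnetic, and quantum field systems, with the BEC example singled out), and by deferring the broader statement to the authority of \cite{MW6}, which the authors themselves invoke. I would flag in the write-up that this is a postulate elevated to lemma status on the strength of empirical universality rather than a proposition with a mathematical proof, so that the reader is not misled about the logical standing of the result.
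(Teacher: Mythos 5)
The paper offers no proof of this lemma at all: it is stated as an imported physical principle with the citation to \cite{MW6} serving as its entire justification, exactly as you anticipated. Your assessment that the statement is a postulate rather than a provable proposition, to be supported only by exhibiting Lagrangians for the relevant classes of systems and by deferring to the cited reference, is the correct reading and is consistent with (indeed more careful than) what the paper itself does.
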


\subsection{Steady state bifurcation theory}

In order to obtain the bifurcation solutions, we need to introduce the steady state bifurcation theory, which was established by Ma and Wang in \cite{MW1}, \cite{MW2}.

Let $X_{1}$ and $X$ be two Hilbert Spaces, and $X_{1}\subset X$ be a dense and compact inclusion. Consider a parameter family of nonlinear operator equations
\begin{align}\label{226}
L_{\lambda}u+G(u,\lambda)=0,
\end{align}
where $L_{\lambda}:X_{1}\rightarrow X$ is a completely continuous field, and
\begin{equation}\label{2266}
G(u,\lambda)=o(\|u\|)
\end{equation}
is a $C^{r}$ mapping depending on the parameter $\lambda$. Let the eigenvalues(counting multiplicity) of $L_{\lambda}$ be
 given by $\{\beta_{1}(\lambda),\beta_{2}(\lambda),\cdots\}$ with $\beta_i(\lambda)\in\mathbb{R}^1(1\leq i\leq m)$ such that
\begin{align}\label{227}
 &\beta_{i}(\lambda)\left\{
   \begin{array}{ll}
   <0, \quad \text{if}~\lambda<\lambda_{0},
   \\ =0, \quad \text{if}~\lambda=\lambda_{0},~~\forall 1\leq i\leq m,
   \\ >0, \quad \text{if}~\lambda>\lambda_{0},
 \end{array}
\right.\\
&\beta_{j}(\lambda_{0})\neq 0, ~~~\text{for~~all} ~\forall~~ j\geq m+1.\label{228}
\end{align}

According to  the spectral decomposition theory in \cite{MW2}, let $\{ \omega_{1}(\lambda),\omega_{2}(\lambda),$ $\cdots \}$ be the generalized eigenvectors corresponding to  $\{\beta_{1}(\lambda),\beta_{2}(\lambda),\cdots\}$, and $E_{1}^{\lambda}=\text{span}\{\omega_{1}(\lambda),\omega_{2}(\lambda),\cdots,\omega_{n}(\lambda)\}$, then near $\lambda=\lambda_{0}$ the space $X_{1}$ can be decomposed into the following direct sum
\begin{align}\label{229}
X_{1}=E_{1}^{\lambda}\oplus E_{2}^{\lambda},
\end{align}
where $E_{2}^{\lambda}$ is the complement of $E_{1}^{\lambda}$.

Now, we give the Lyapunov-Schmidt procedure. For any $\lambda$ near $\lambda_{0}$, the linear operator $L_{\lambda}$ can be decomposed into $L_{\lambda}=L_{\lambda}^{1}\oplus L_{\lambda}^{2}$  such that
\begin{eqnarray}\nonumber
  && L_{\lambda}^{1}=L_{\lambda}\mid_{ E_{1}^{\lambda}}:E_{1}^{\lambda}\rightarrow E_{1}^{\lambda},\\
  \nonumber && L_{\lambda}^{2}=L_{\lambda}\mid_{ E_{2}^{\lambda}}:E_{2}^{\lambda}\rightarrow \widetilde{E}_2^\lambda.
\end{eqnarray}
Thus, near $\lambda_{0}$, $(\ref{226})$ can be equivalently written as
\begin{eqnarray}\label{232}
  \label{232} && L_{\lambda}^{1}v_{1}+P_{1}G(v_{1}+v_{2},\lambda)=0,\\
  \label{232} && L_{\lambda}^{2}v_{2}+P_{2}G(v_{1}+v_{2},\lambda)=0,
\end{eqnarray}
where $P_{1}:X \rightarrow E_{1}^{\lambda}$ and $P_{2}:X \rightarrow \widetilde{E}_2^\lambda$ are the canonical projections, $u=v_{1}+v_{2}$, $v_{1}\in E_{1}^{\lambda}$ and $v_{2}\in E_2^\lambda$. From $(\ref{228})$, $L_{\lambda}^{2}$ is a linear homeomorphism near $\lambda_{0}$. By the implicit function theorem in \cite{MW2}, there exists a solution $v_{2}=f(v_{1},\lambda)$ of $(\ref{232})$, which is called center manifold function. Then bifurcation equation $(\ref{226})$ degenerate to  the following equation
\begin{eqnarray}\label{234}
  L_{\lambda}^{1}v_{1}+P_{1}G(v_{1}+f(v_{1},\lambda),\lambda)=0.
\end{eqnarray}
where $v_{1}\in E_{1}^{\lambda}$ and $\text{dim} E_{1}^{\lambda}=m<\infty$.

\section{Quantum Phase Transition(QPT)}

\subsection{The general model for QPT}

Note that QPT occurs in a physical motion system.
By the Lemma 2.2, the Lagrange action of this system can be
\begin{align}\label{220}
L(\psi,\psi^{*})=\int_{0}^{T}\int_{\Omega}\mathcal{L}(\psi,\psi^{*})dxdt,
\end{align}
where $\psi=\psi^{1}+i\psi^{2}$ is the wave function  and $\psi^{*}$ is the complex conjugate of $\psi$.

Furthermore, we get the variational equation as follows
\begin{align}\label{221}
\frac{\delta}{\delta \psi^{*}}L(\psi,\psi^{*})=0.
\end{align}

From \cite{MW6}, the Lagrange action of a quantum system with Hamiltonian $H(\psi,\psi^{*},\eta)$ is given by
\begin{align}\label{222}
\mathcal{L}(\psi,\psi^{*})=i\hbar\psi^{*}\frac{\partial \psi}{\partial t}-\mathcal{H}(\psi,\psi^{*},\eta),
\end{align}
where $H(\psi,\psi^{*},\eta)=\int_{\Omega}\mathcal{H}(\psi,\psi^{*},\eta)\text{d}x$ and $\eta$ is a parameter.

 Combing  $(\ref{221})$ and  (\ref{222}), we have
\begin{align}\label{223}
i\hbar\frac{\partial \psi}{\partial t}-\frac{\delta{H}(\psi,\psi^{*},\eta)}{\delta \psi^{*}}=0.
\end{align}

According to \cite{MW6}, the QPT system is energy conservation. From the Lemma 2.1, (\ref{223}) is equivalent  to
\begin{eqnarray}\label{2244}
\begin{aligned}
& \frac{\partial \psi^1}{\partial t}&=&\alpha \frac{\delta H(\psi,\psi^{*},\eta)}{\delta \psi^2},\\
& \frac{\partial \psi^2}{\partial t}&=&-\alpha \frac{\delta H(\psi,\psi^{*},\eta)}{\delta \psi^1},
\end{aligned}
\end{eqnarray}
where $\psi=\psi^{1}+i\psi^{2}$ and  $\alpha$ is a constant.

Owing to the basic theory in quantum physics \cite{CTW}, the wave function $\psi$ in an energy-conservation quantum system can be given as
\begin{align}\label{224}
\psi(x,t)=e^{-i\lambda t}\varphi(x),
\end{align}
where $\lambda=\frac{E}{\hbar}$, $\varphi(x)$ is a complex function of $x\in\Omega$, and $H(\psi,\psi^{*},\eta)=H(\varphi,\varphi^{*},\eta)$.

From (\ref{223}) and (\ref{224}), we get
\begin{equation}\label{2006}
  \frac{\delta H(\varphi,\varphi^{*},\eta)}{\delta \varphi^{*}}=E\varphi=h\lambda\varphi,
\end{equation}
where $E$ is the total energy, $\eta$ is the control parameter. Meanwhile, it is obvious that ($\ref{2006}$) is independent of time.

 Based on the quantum mechanics theory, the quantum system is described by $(\ref{2006})$ with a constraint condition as follows:
\begin{align}\label{q1}
H(\varphi,\varphi^{*},\eta)=
\min_{\int_{\Omega}|\tilde{\varphi}|^{2}dx=\text{const.}}
H(\tilde{\varphi},\tilde{\varphi}^{*},\eta),
\end{align}
which is a physical requirement. The equation (\ref{2006}) under the constraint condition (\ref{q1}) is the general model to study the quantum phase transition.

Without loss of generality, we impose the following Dirichlet boundary value condition
\begin{equation}\label{b1}
 \varphi=0~~~\text{on}~~~~\partial\Omega.
\end{equation}
\subsection{Definition of  QPT}

Now, we give the definition of QPT.

\begin{definition} Let $\Gamma $ be an energy-conservation quantum system described by (\ref{2006}) with (\ref{b1}). We say quantum phase transition occurs at $\eta=\eta_{c}$ in system $\Gamma$ provided that some new quantum states appear around the critical point $\eta=\eta_{c}$ of (\ref{2006}) with (\ref{b1}), see Figure 1.
\end{definition}

\begin{figure}[H]
  \centering
  % Requires \usepackage{graphicx}
  \includegraphics[width=10cm]{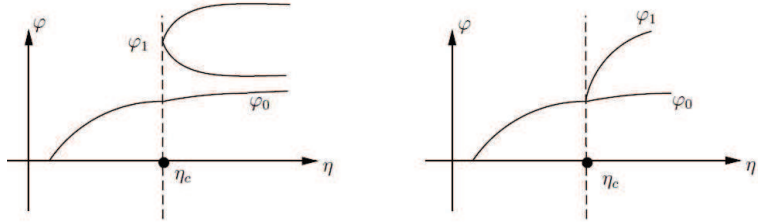}
  \caption{ The graph of definition 3.1. }\label{}
\end{figure}
\begin{remark}
In view of mathematics, it is clear to see that the new quantum states are the bifurcation solutions of (\ref{2006}) with (\ref{b1}). So, we give the equivalent definition as follows. Let $\varphi_0(\eta)$ be a quantum state of $\Gamma$  with (\ref{b1}), i.e., $\varphi_0(\eta)$ is a solution of (\ref{2006}) with (\ref{b1}).
We say quantum phase transition occurs at $\eta=\eta_{c}$ in $\Gamma$ provided that a new solution $\varphi_1(\eta)$ bifurcates from $(\varphi_0(\eta), \eta_c)$.  Hence, the bifurcation theory can be applied to study the quantum phase transitions.
\end{remark}

\section{Application}
\subsection{Bose-Einstein Condensate }

A Bose-Einstein condensate (BEC) is a state of matter of a dilute gas of bosons cooled to temperatures very close to absolute zero. Under such conditions, a large fraction of bosons occupy the lowest quantum state, at which point macroscopic quantum phenomena become apparent.

%Bose-Einstein Condensate (BEC) is a state of matter of a dilute gas of bosons, and
%the atoms would be in different quantum levels at normal temperature. However, a large fraction of bosons occupy the lowest quantum state and most of the atoms are in the same quantum level when the system is cooled to temperature very close to absolute zero. That is to say,  the majority of the atoms are in the same quantum state and have the same velocity near a sufficiently low temperature. In this way, some new macroscopic quantum phenomenons become apparent by changing the external potential of the system in this quantum system, which implies that the Bose-Einstein Condensate is a conical quantum phase transition phenomenon.

In our paper, the interaction between bosons is ignored, the Gross-Pitaevskii equation (GPE) provides a relatively good description of the behavior of atomic BEC's (see \cite{R}),  which is shown as follows
\begin{align}\label{302}
i\hbar\frac{\partial \psi}{\partial t}=-\frac{\hbar^{2}}{2m}\Delta\psi+V(x)\psi+g|\psi|^{2}\psi,
\end{align}
where $m$ is the mass of the bosons, $V(x)$ is the external potential, $\psi$ is a complex wave function, $g$ represents the inter-particle interactions(if $g<0$, particles in this system attract each other, if $g>0$, particles in this system repel each other), note that $\Omega\subset R^{3}$ is a bounded region.

In addition, without the loss of generality, let
\begin{align}\label{303}
 &V(x)=\left\{
   \begin{array}{ll}
   0, \quad \text{if}~x~\notin~\Omega,
   \\\beta, \quad\text{if}~x~\in~\Omega,
\end{array}
\right.
\end{align}
where $\beta\neq 0$. The Hamilton energy of (\ref{302}) is given by
\begin{align}\label{304}
H(\psi,\psi^*\beta)=\int_{\Omega}\left[\frac{\hbar^{2}}{4m}|\nabla\psi|^{2}+\beta|\psi|^{2}
+\frac{1}{2}g|\psi|^{4}\right]dx.
\end{align}
Hence, the general model (\ref{2006}) for BEC is obtained, which is given as
\begin{align}\label{305}
\left\{
\begin{aligned}
&-\frac{\hbar^{2}}{4m}\Delta\varphi+(\beta-\hbar\lambda)\varphi+g\varphi^{3}=0,~~~~x\in\Omega,\\
&\varphi=0,~~~~~~~~~~~~~~~~~~~~~~~~~~~~~~~~~~~~~~~~x\in\partial\Omega,
\end{aligned}
\right.
\end{align}
where $\varphi$ is a complex function of $x$, $\beta$ is the control parameter of this system, $\lambda=\frac{E}{\hbar}$ and $g\neq0$ is a given constant.

Physically speaking, $(\ref{305})$ has a constraint condition
\begin{align}\label{301}
N=\int_{\Omega}|\psi|^{2}dx=\text{const.},
\end{align}
where $\psi(x,t)=e^{-i\lambda t}\varphi(x)$ is the wave function of BEC,  $|\psi|^{2}$ is interpreted as the particle density.

\begin{remark}
Note that $\psi=\psi_{1}+i\psi_{2}$, the Hamilton equation of this system can be written as
\begin{eqnarray}\nonumber
\hbar\frac{\partial\psi_{1}}{\partial t}=\frac{\delta H}{\delta \psi_{2}}=-\frac{\hbar^{2}}{2m}\Delta\psi_{2}+(\beta+g|\psi|^{2})\psi_{2},\\
\nonumber \hbar\frac{\partial\psi_{2}}{\partial t}=-\frac{\delta H}{\delta \psi_{1}}=\frac{\hbar^{2}}{2m}\Delta\psi_{1}-(\beta+g|\psi|^{2})\psi_{1},
\end{eqnarray}
which equals to $(\ref{302})$, hence the system $(\ref{302})$ is a Quantum Hamilton System.
\end{remark}

\subsection{QPT in BEC}
Let $\Omega\subset \mathbb{R}^{3}$ be a bounded region.
\begin{align}\label{307}
X =L ^{2}(\Omega),\quad X_{1}=H ^{2}(\Omega)\cap H ^{1}_{0}(\Omega).
\end{align}
Define the linear completely continuous field $L_{\beta}:X_{1} \rightarrow X$
\begin{align}\label{309}
L_{\beta}=-\frac{\hbar^{2}\Delta}{4m}-\hbar\lambda+\beta.
\end{align}

Let $\{e_{k}\}$ be a basis of $X_{1}$,  $e_{k}$ and $\xi_{k}$ satisfy the following equations
\begin{eqnarray}\label{3099}
\left\{
   \begin{array}{ll}
   -\Delta e_{k}=\xi_{k}e_{k},
   \\e_{k}=0~on~\partial\Omega,
   \\ \int_{\Omega}e_{k}^{2}dx=1,
\end{array}
\right.
\end{eqnarray}
here $\xi_{k}$ is the k-th eigenvalue of $-\Delta$, $0<\xi_{1}\leq\xi_{2}\leq \cdots$, and $e_{k}$ is the eigenvector of $-\Delta$ corresponding to $\xi_{k}$.

For simplicity, we denote
\begin{eqnarray}
\label{312}&&\alpha_{k}=\int_{\Omega}e_{k}^{4}dx,\\
\label{3121}&&\gamma_{k}(\beta)=\frac{\hbar^{2}}{4m}\xi_{k}-\hbar\lambda+\beta.
\end{eqnarray}
Now, we give the following theorem, which is a characterization of BEC by regading the external potential $\beta$ as the control parameter.

\begin{theorem}
Let the external potential $\beta\neq0$ be a control parameter of $(\ref{305})$. Then the following conclusions of $(\ref{305})$  hold.
\begin{description}
\item~(1)~There is a family of quantum critical points $\beta_{k}$:
\begin{align}\label{310}
\beta_{k}=-(\frac{\hbar^{2}\xi_{k}}{4m}-\hbar\lambda),~~~k=1,2,\cdots
\end{align}
where $m$ is the mass of the bosons, $\xi_{k}$ is defied by (\ref{3099}), $\lambda=\frac{E}{\hbar}$(E is the total energy, $\hbar$ is the Plank constant).
\item~(2)~If $g>0$, then there are two nontrivial solutions $\psi_{k}^{\pm}$ bifurcates from  $(0,\beta_{k})$(k=1,2,$\cdots$) when $\beta<\beta_{k}$, which represent new quantum states.
\item~(3)~If $g<0$, then there are two nontrivial solutions $\psi_{k}^{\pm}$ bifurcates from  $(0,\beta_{k})$(k=1,2,$\cdots$) when $\beta>\beta_{k}$, which represent new quantum states.
\item~(4)~The nontrivial solutions $\psi_{k}^{\pm}$ are expressed  as follows:
\begin{align}\label{311}
\psi_{k}^{\pm}(x,t,\beta)=\bigg[\pm\sqrt{\frac{-\gamma_{k}(\beta)}{g\alpha_{k}}}e_{k}
+o(|\beta-\beta_{k}|)\bigg]e^{-i\lambda t},~~k=1,2,\cdots
\end{align}
\end{description}

\end{theorem}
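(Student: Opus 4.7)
The plan is to cast $(\ref{305})$ as a parameter-dependent operator equation on the Hilbert pair $(X_1,X)$ from $(\ref{307})$ and apply the steady-state bifurcation theory recalled in Section 2.2. Writing $(\ref{305})$ as $L_\beta\varphi + G(\varphi,\beta) = 0$ with $L_\beta$ as in $(\ref{309})$ and $G(\varphi,\beta) = g\varphi^3$, the nonlinearity satisfies $\|G(\varphi,\beta)\|_X = o(\|\varphi\|_{X_1})$ by the Sobolev embedding $H^2(\Omega)\hookrightarrow L^\infty(\Omega)$ in $\mathbb{R}^3$, verifying $(\ref{2266})$. Because $-\Delta$ with Dirichlet condition has compact resolvent on $\Omega$, $L_\beta$ is a completely continuous field, and the basis $\{e_k\}$ from $(\ref{3099})$ diagonalizes it: $L_\beta e_k = \gamma_k(\beta)e_k$ with $\gamma_k(\beta)$ as in $(\ref{3121})$. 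Since $\gamma_k$ is affine in $\beta$ with slope $1$ and unique zero at $\beta_k = \hbar\lambda - \hbar^2\xi_k/(4m)$, each eigenvalue crosses zero transversally at the critical value $\beta_k$ in $(\ref{310})$, yielding conclusion (1). Assuming that $\xi_k$ is a simple Dirichlet eigenvalue, the remaining $\gamma_j(\beta_k)=(\xi_j-\xi_k)\hbar^2/(4m)\neq 0$ for $j\neq k$, so the crossing hypotheses $(\ref{227})$–$(\ref{228})$ hold with a one-dimensional crossing eigenspace.

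Next I would carry out the Lyapunov--Schmidt reduction at $\beta_k$. The decomposition $(\ref{229})$ becomes $E_1^{\beta}=\mathrm{span}\{e_k\}$, and writing $\varphi = x\,e_k + v_2$ with $v_2\in E_2^\beta$, the implicit function theorem applied to $(\ref{232})$ gives the center manifold $v_2 = f(x,\beta)=O(x^3)$, since the forcing $P_2G = g\,P_2(xe_k+v_2)^3$ starts at order three. Substituting into $(\ref{234})$ and taking the $L^2$-inner product with $e_k$, all contributions involving $v_2$ are of order $x^5$ or higher, so the reduced bifurcation equation takes the form
\begin{equation*}
\gamma_k(\beta)\,x + g\,\alpha_k\,x^3 + o(|x|^3) = 0,
\end{equation*}
with $\alpha_k = \int_\Omega e_k^4\,dx>0$ from $(\ref{312})$.

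Finally, the reduced equation has the trivial branch $x=0$ together with two nontrivial branches $x = \pm\sqrt{-\gamma_k(\beta)/(g\alpha_k)} + o(1)$, which exist as real solutions precisely when $-\gamma_k(\beta)/(g\alpha_k)>0$. Because $\gamma_k(\beta)=\beta-\beta_k$, this gives $\beta<\beta_k$ when $g>0$ (conclusion (2)) and $\beta>\beta_k$ when $g<0$ (conclusion (3)). Lifting via $\varphi = xe_k + f(x,\beta)$ and attaching the time factor $e^{-i\lambda t}$ from $(\ref{224})$ then produces the explicit expression $(\ref{311})$ for $\psi_k^\pm$, giving conclusion (4).

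The principal obstacle is reconciling the abstract framework of Section 2.2, which is formulated for real Hilbert spaces, with the complex-valued wave function $\varphi$. The global $U(1)$ gauge symmetry $\varphi\mapsto e^{i\theta}\varphi$ of $(\ref{305})$ lets one select a real representative in each orbit, which reduces the bifurcation problem to the real spaces in $(\ref{307})$ and makes the expression $g\varphi^3$ unambiguous. A secondary caveat is that a general bounded $\Omega\subset\mathbb{R}^3$ may have multiple Dirichlet eigenvalues $\xi_k$; in that case the scalar reduced equation is replaced by a finite-dimensional gradient system whose leading cubic is still definite (since $\alpha_k>0$ and $g\neq 0$), and a standard Krasnoselskii-type argument produces at least two nontrivial branches along suitable $\pm e_k$ directions, preserving the form of $(\ref{311})$ after a basis choice.
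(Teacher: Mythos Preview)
Your proposal is correct and follows essentially the same route as the paper: verify the eigenvalue crossing $(\ref{227})$--$(\ref{228})$ for $L_\beta$, perform the one-dimensional Lyapunov--Schmidt reduction onto $\mathrm{span}\{e_k\}$, solve the resulting cubic $\gamma_k(\beta)x + g\alpha_k x^3 = 0$, and reattach the time factor $e^{-i\lambda t}$. The paper simply drops the center manifold contribution outright (writing ``without center manifold function'') and invokes Krasnoselskii for the existence of the bifurcation, whereas you justify the same omission by noting $f(x,\beta)=O(x^3)$ and additionally address the real-vs-complex and eigenvalue-multiplicity caveats that the paper leaves implicit.
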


\begin{proof}
The eigenvalues (counting multiplicity) $\{\gamma_{1}(\beta),\gamma_{2}(\beta),\cdots\}$ of (\ref{305}) are given by (\ref{3121}).

It is easy to check that
\begin{align}\label{317}
 &\gamma_{k}(\beta)\left\{
   \begin{array}{ll}
   <0, \quad \text{if}~~\beta<\beta_{k},
   \\=0, \quad \text{if}~~\beta=\beta_{k},
   \\>0, \quad \text{if}~~\beta>\beta_{k},
\end{array}
\right.\\
&\gamma_{j}(\beta_{k})\neq 0,\quad \text{for~ all} ~j\neq k.
\end{align}
where $\beta_k$ is defined by $(\ref{310})$. Due to the $\gamma_{k}(\beta)$ is single multiple,  $(\varphi,\beta)=(0,\beta_{k})$ is a bifurcation point of $(\ref{305})$ for each $k$ by the classical Krasnoselskii Theorem \cite{111}. Set
\begin{align}\label{}
G(\varphi,\beta)=g\varphi^{3}.
\end{align}
It is easy to see that the equation $(\ref{305})$ is equivalent to the following form:
\begin{align}\label{}
\left\{
\begin{aligned}
&L_{\beta} \varphi+G(\varphi,\beta)=0,~~~~x\in\Omega,\\
&\varphi=0,~~~~~~~~~~~~~~~~~~~~x\in\partial\Omega,
\end{aligned}
\right.
\end{align}
Notice that $E_{1}=\text{span}\{e_{k}\}$ and
\begin{align}\label{}
X_{1}= E_{1}\oplus E_{2}.
\end{align}
Let $L_{\beta}=L_{\beta}^{1}+L_{\beta}^{2}$,
\begin{align}\label{}
L_{\beta}^{1}=L_{\beta}|_{E_{1}}:E_{1}\longrightarrow E_{1}, \\
L_{\beta}^{2}=L_{\beta}|_{E_{2}}:E_{2}\longrightarrow \widetilde{E}_{2},
\end{align}
Let $P_{1}:X_{1}\rightarrow E_{1}$ be the canonical projection, and
\begin{align}\label{}
\varphi=\varphi_{1}+\varphi_{2},
\end{align}
where $\varphi_{1}\in E_{1}$ and $ \varphi_{2}\in E_{2}$, and $ \varphi_{1}=x_{k}e_{k}$.
Due to the Lyapunov-Schmidt reduction procedure \cite{MW2}, the reduction bifurcation equation of $(\ref{305})$  without center manifold function is obtained as follows
\begin{align}\label{}
L_{\beta}^{1}\varphi_{1}+P_{1}G(\varphi_{1},\beta)=0.
\end{align}
Hence, we have
\begin{align}\label{319}
\gamma_{k}(\beta)x_{k}+g\alpha_{k} x_{k}^{3}=0,
\end{align}
where
\begin{align}
\alpha_{k}=\int_{\Omega}e_{k}^{4}dx\neq0.
\end{align}
%So we do not need to calculate the center manifold function.
The bifurcation solutions around the critical points $\beta_k$ are obtained in two cases $g > 0$ and
$g < 0$.

First, we prove the case that $g>0$.

If $\beta<\beta_{k}$(i.e. $\gamma_{k}(\beta)<0$), there are two nontrivial solutions $x^{\pm}_{k}(\beta)$ of $(\ref{319})$ as follows
\begin{align}\label{}
x_{k}^{\pm}(\beta)=\pm\sqrt{\frac{-\gamma_{k}(\beta)}{g\alpha_{k}}}.
\end{align}
Then there are two new quantum states
\begin{align}\label{}
\varphi_{k}^{\pm}(x,\beta)=\pm\sqrt{\frac{-\gamma_{k}(\beta)}{g\alpha_{k}}}e_{k}+ o(|\beta-\beta_{k}|),
\end{align}
bifurcate from  $(0,\beta_{k})$ when $\beta<\beta_{k}$. Thus, the conclusion (2) is proved.

It is clear that $g<0$ is similar to the case $g>0$. Then the conclusion (3) is also proved. By (\ref{224}), we have
\begin{align}\label{324}
\psi_{k}^{\pm}(x,\beta,t)=\varphi_{k}^{\pm}(x,\beta)e^{-i\lambda t}.
\end{align}
The proof is complete.
\end{proof}

\begin{remark}
Notice that $g>0$ represents the particles repel
each other, the two bifurcation solutions  $\psi_{k}^{\pm}$  imply that  two new quantum states $\psi_{k}^{\pm}$ may occur around the critical point $\beta_{k}$ when $\beta<\beta_{k}$. However, $g<0$ represents the particles attract to each other, the two bifurcation solutions  $\psi_{k}^{\pm}$  imply that two new quantum states $\psi_{k}^{\pm}$ may occur around the critical point $\beta_{k}$ when $\beta<\beta_{k}$.
\end{remark}

%From (\ref{310}), it is easy to obtain  that there exists a $1<k_0\in \mathbb{N}$ such that
%\begin{equation}\label{k}
%\beta_k<0,\quad\text{for}\ \ \forall k\geq k_0.
%\end{equation}
In order to obtain some important properties of BEC, we only consider $\Omega=[0,L]\subset \mathbb{R}^{1}$. The main Theorem is shown as follows.

\begin{theorem}
Let $\Omega=[0,L]\subset \mathbb{R}^{1}$ and $g>0$. For the quantum system $(\ref{305})$,  then we have the following conclusions:
\begin{description}
%\item~(1)~For any bifurcation solution $\varphi(x,\beta)$ of $(\ref{305})$, there doesn't exist a point $\beta_{0}\in \mathbb{R}$, such that
%\begin{align}\label{eq1}
%\varphi(x,\beta)\rightarrow\infty, ~as~\beta\rightarrow\beta_{0}.
%\end{align}

\item~(1)~Let $\beta_{i}$ and $\beta_{j}$($i\neq j$) be two different bifurcation points. Let the nontrivial solution $\varphi_{k}(x,\beta)$ bifurcates from  $(0,\beta_{k})$ $(k=i,j)$, then there doesn't exit $\theta \in \mathbb{R}$, such that
\begin{align}\label{equal}
\varphi_{i}(x,\theta)=\varphi_{j}(x,\theta).
\end{align}
Namely, the two bifurcated branches, $\Gamma_i$ bifurcates from (0,$\beta_i$) and $\Gamma_j$ bifurcates from (0,$\beta_j)$, don't intersect.

\item~(2) if $\beta_{j+1}<\beta<\beta_{j}(j=1,2,\cdots)$, then there exit at least $2j$ new quantum states $\varphi(\beta)$ in BEC system $(\ref{305})$.

%\item~(3)~Let $k\geq k_0$($k_0$ is defined by (\ref{k})). For any $\theta\in(-\infty, \beta_{k}]$,
% the solution $\varphi_{k}(x,\theta)$ is nontrivial.
%For any bifurcation solution $\varphi_{k}(x,\beta)$ bifurcating from the bifurcation point $\beta_{k}$. Then for any point $\beta\in(-\infty, \beta_{k}]$, $\varphi_{k}(x,\beta)$ is a nontrivial solution of $(\ref{305})$, in the other word, $\varphi_{k}(x,\beta)$ is defined on the whole region of $(-\infty, \beta_{k}]$.

\end{description}
\end{theorem}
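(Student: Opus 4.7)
My plan is to combine the local bifurcation structure from Theorem 4.1 with a Sturm-type nodal analysis on $[0,L]$ and Rabinowitz's global bifurcation theorem. In one space dimension the Dirichlet eigenfunctions of $-\Delta$ on $[0,L]$ are $e_k(x)=\sqrt{2/L}\,\sin(k\pi x/L)$ with $\xi_k=(k\pi/L)^2$, so each $e_k$ has exactly $k-1$ simple interior zeros and the critical values $\beta_1>\beta_2>\cdots$ are strictly decreasing and algebraically simple. By Theorem 4.1 the local branch $\Gamma_k^\pm$ issuing from $(0,\beta_k)$ has leading form $\varphi=\pm\sqrt{-\gamma_k(\beta)/(g\alpha_k)}\,e_k+o(|\beta-\beta_k|)$, so near $\beta_k$ every solution on $\Gamma_k^\pm$ inherits exactly $k-1$ simple interior zeros.

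For conclusion (1) I would show that the interior nodal count is a topological invariant along each connected branch. Writing the ODE $-\frac{\hbar^{2}}{4m}\varphi''+(\beta-\hbar\lambda)\varphi+g\varphi^{3}=0$ as a first-order system in $(\varphi,\varphi')$, Cauchy uniqueness forbids a nontrivial solution from having a double interior zero, since at such a point both $\varphi$ and $\varphi'$ vanish and the only solution is $\varphi\equiv 0$. Combined with continuity in $(\varphi,\beta)$ along the continuum and the Dirichlet pinning at $x=0,L$, this rules out zeros colliding in the interior or being born at the boundary as $\beta$ varies. Hence every $\varphi_k\in\Gamma_k^\pm$ retains exactly $k-1$ simple interior zeros, and for $i\neq j$ the functions $\varphi_i(\cdot,\theta)$ and $\varphi_j(\cdot,\theta)$ differ in nodal count and cannot coincide for any $\theta\in\mathbb{R}$.

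For conclusion (2) I would apply the Rabinowitz global bifurcation theorem to $L_\beta\varphi+g\varphi^{3}=0$ at each simple eigenvalue $\beta_k$, obtaining global continua $\Gamma_k^\pm\subset X_1\times\mathbb{R}$ which are either unbounded or return to the trivial line at some $(0,\beta_{k'})$. The return alternative is excluded by (1) together with the distinct nodal signatures, so every $\Gamma_k^\pm$ is unbounded. Testing the equation against $\varphi$ yields the a priori identity
\begin{equation*}
\frac{\hbar^{2}}{4m}\int_{0}^{L}|\varphi'|^{2}\,dx+g\int_{0}^{L}\varphi^{4}\,dx=(\hbar\lambda-\beta)\int_{0}^{L}\varphi^{2}\,dx,
\end{equation*}
which, via Cauchy--Schwarz and the Poincaré inequality, delivers uniform $H^{1}$ bounds on every compact $\beta$-interval; hence $\Gamma_k^\pm$ can only be unbounded through its $\beta$-projection. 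Because $g>0$ forces the local branch into $\{\beta<\beta_k\}$, connectedness then propagates the $\beta$-projection of $\Gamma_k^\pm$ over all of $(-\infty,\beta_k)$. For $\beta\in(\beta_{j+1},\beta_j)$ each of $\Gamma_1^\pm,\ldots,\Gamma_j^\pm$ therefore contributes a solution at $\beta$, and by (1) together with the $\pm$-sign distinction these $2j$ solutions are pairwise distinct.

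The main obstacle will be the global extension step: simultaneously securing the $H^{1}$ a priori bound uniformly on compact $\beta$-intervals, invoking Rabinowitz's alternative to trace each $\Gamma_k^\pm$ down to every $\beta<\beta_k$, and certifying nodal invariance all along the continuum so that distinct branches cannot merge. Once nodal invariance and global extension are in hand, both conclusions follow.
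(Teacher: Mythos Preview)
Your proposal is correct and, for part (2), essentially coincides with the paper: both argue that the energy identity forbids blow-up of $\varphi$ at any finite $\beta$, and then invoke Rabinowitz's global alternative together with part (1) to conclude that each branch $\Gamma_k^\pm$ extends over all of $(-\infty,\beta_k)$, yielding $2j$ distinct states for $\beta\in(\beta_{j+1},\beta_j)$.

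For part (1), however, your route is genuinely different. The paper argues by contradiction via a continuous deformation: assuming $\varphi_i(\cdot,\theta)=\varphi_j(\cdot,\theta)$, it builds a homotopy $H(\varphi,\delta)$ between the two local profiles $\sin(i\pi x/L)$ and $\sin(j\pi x/L)$ and then rules out the intermediate shapes case by case, using the strong maximum principle to exclude a sign-definite lobe touching zero in its interior or flattening onto an interval, and the Poincar\'e inequality on a shrinking subinterval to exclude the remaining ``tail pinching off'' scenario. Your argument instead exploits the one-dimensional ODE structure directly: Cauchy uniqueness forbids double zeros of nontrivial solutions, so the number of simple interior zeros is locally constant along any continuum of nontrivial solutions, and since the branch from $(0,\beta_k)$ starts with $k-1$ zeros this nodal count is a global invariant separating the $\Gamma_k$. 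This is the classical Sturm--Rabinowitz nodal-invariance argument; it is shorter, avoids the case analysis, and simultaneously supplies exactly the ingredient needed to discard the ``return to the trivial line'' alternative in Rabinowitz's theorem for part (2). The paper's approach, by contrast, works more at the PDE level (maximum principle, Poincar\'e) and does not explicitly isolate the nodal count as the separating invariant.
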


\begin{proof}
(1) Suppose on the contrary that there exits a point $\theta \in \mathbb{R}$, such that
\begin{align}\label{}
\varphi_{i}(x,\theta)=\varphi_{j}(x,\theta)(i\neq j).
\end{align}
Without loss of generality, we only consider $\varphi_{i}(x,\theta)$ and $\varphi_{i+1}(x,\theta)$. For simplicity, we consider $\varphi_{1}(x,\theta)$ and $\varphi_{2}(x,\theta)$ with,
\begin{align}\label{341}
\varphi_{1}(x,\theta)=\varphi_{2}(x,\theta).
\end{align}
From Theorem 3.1, we have
\begin{align}\label{323}
\varphi_{1}(x,\beta)=\sqrt{\frac{-\gamma_{1}(\beta)}{g\alpha_{1}}}\sin\bigg(\frac{\pi x}{L}\bigg)+o(|\beta-\beta_{1}|),\\
\varphi_{2}(x,\beta)=\sqrt{\frac{-\gamma_{2}(\beta)}{g\alpha_{1}}}\sin\bigg(\frac{2\pi x}{L}\bigg)+o(|\beta-\beta_{2}|),
\end{align}
When $|\beta-\beta_{1}|$ is small enough, the graph of $\varphi_{1}(x,\beta)$ is shown as follows.
\begin{figure}[H]
  \centering
  % Requires \usepackage{graphicx}
  \includegraphics[width=5cm]{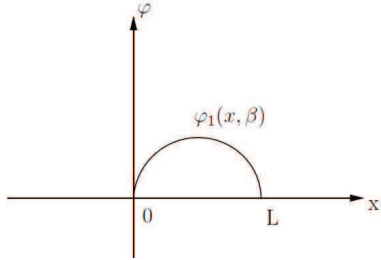}
  \caption{ The graph of $\varphi_{1}(x,\beta)$ with $|\beta-\beta_{1}|\ll1$. }\label{}
\end{figure}
When $|\beta-\beta_{2}|$ is small enough, the graph of $\varphi_{2}(x,\beta)$ is shown as follows.
\begin{figure}[H]
  \centering
  % Requires \usepackage{graphicx}
  \includegraphics[width=5cm]{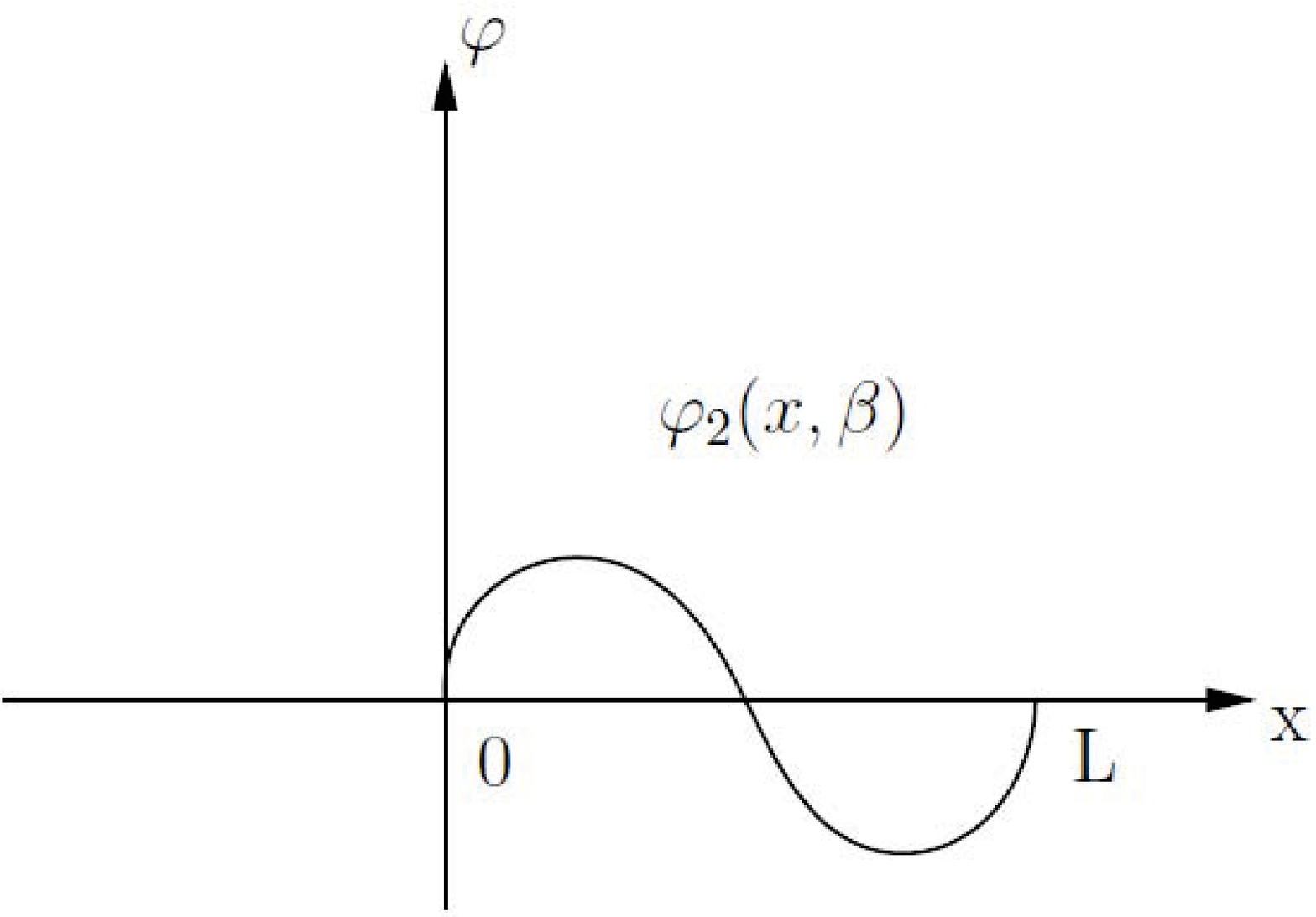}
  \caption{The graph of $\varphi_{2}(x,\beta)$ with $|\beta-\beta_{2}|\ll1$. }\label{}
\end{figure}
If the assumption $(\ref{341})$ holds, there exits a continuous function
\begin{align}\label{}
H:X_{1}\times[0,1]\rightarrow X_{1},
\end{align}
such that
\begin{align}\label{}
 \left\{
   \begin{array}{ll}
   H(\varphi,0)=\sqrt{\frac{-\gamma_{1}(\beta)}{g\alpha_{1}}}\sin(\frac{\pi x}{L}),
   \\H(\varphi,1)=\sqrt{\frac{-\gamma_{2}(\beta)}{g\alpha_{1}}}\sin(\frac{2\pi x}{L}),
\end{array}
\right.
\end{align}
where $X_1$ is defined by (\ref{307}).

It follows that the solution $\varphi_{2}(x,\beta)$ in Figure 2 is continuously deformed to the solution $\varphi_{1}(x,\beta)$ in Figure 1.

Next, we consider the following three cases.

\textbf{Case 1.}
There exists a point $\beta_{0}$, an interval $(\ell_{1}, \ell_{2})\subset[0,L]$, a point $x_{0}\in (\ell_{1}, \ell_{2})$, a point $\delta\in(0,1)$, and $\varphi(x,\beta_{0})$ satisfies
\begin{align}\label{347}
 \left\{
   \begin{array}{ll}
   \varphi(x,\beta_{0})=H(\varphi,\delta),
   \\ \varphi(\ell_{1},\beta_{0})=\varphi(\ell_{2},\beta_{0})=0,
   \\ \varphi(x_{0},\beta_{0})=0,\\
   \varphi(x,\beta_{0})\leq 0,~~\forall x\in (\ell_{1}, \ell_{2}),
\end{array}
\right.
\end{align}
which implies the graph of $\varphi(x,\beta_{0}) $ is shown as follows:
\begin{figure}[H]
  \centering
  % Requires \usepackage{graphicx}
  \includegraphics[width=6cm]{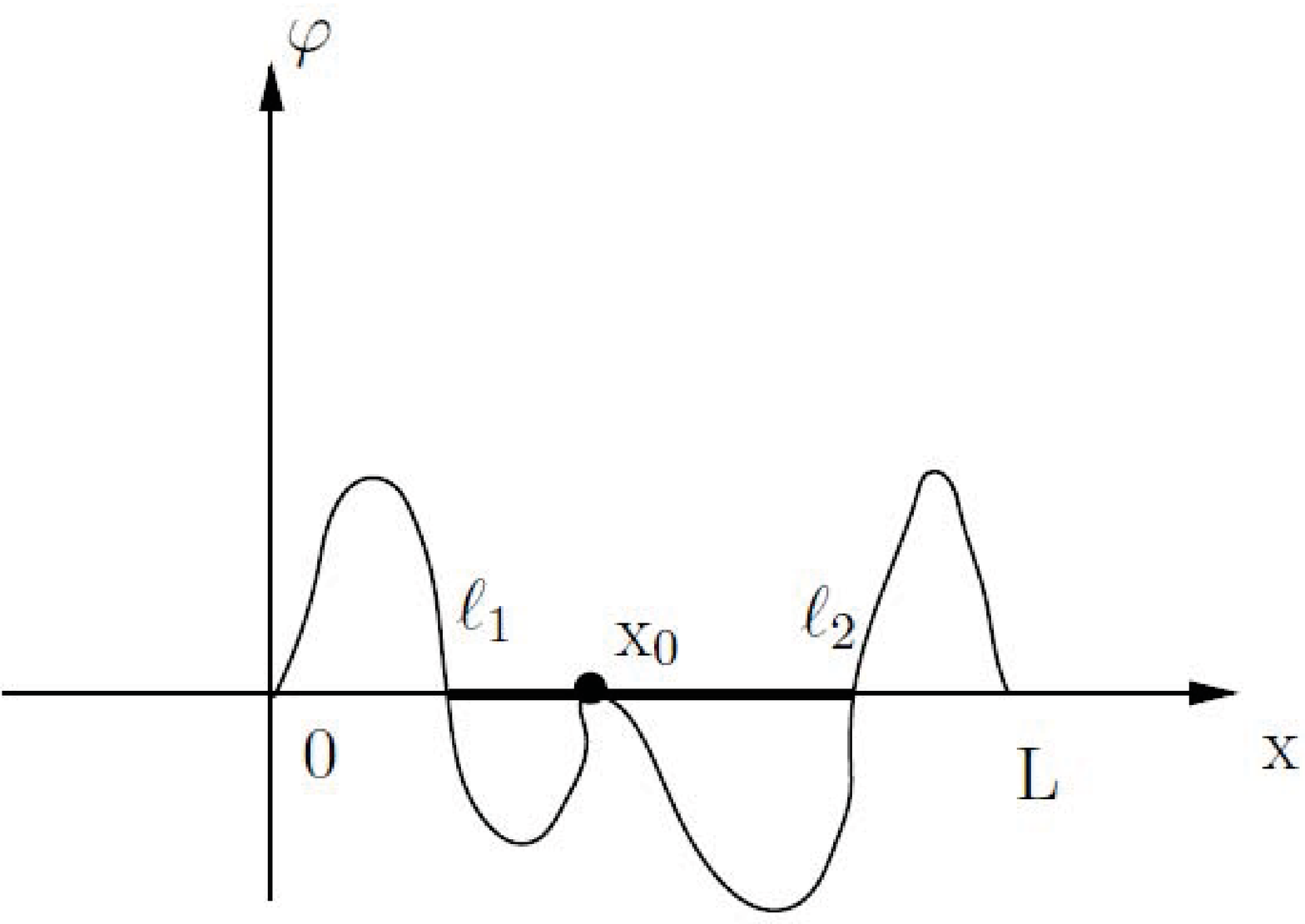}
  \caption{Figure of (\ref{347}).}\label{}
\end{figure}
Moreover, $\varphi(x,\beta_{0}) $  satisfies the following equation
\begin{align}\label{}
 \left\{
   \begin{array}{ll}
   -\frac{\hbar^{2}}{4m}\ddot{\varphi}+\beta_{0}\varphi-\hbar\lambda\varphi+g\varphi^{3}=0,~~x\in(\ell_{1},\ell_{2}),
   \\ \varphi(\ell_{1},\beta_{0})=\varphi(\ell_{2},\beta_{0})=0.
\end{array}
\right.
\end{align}
It is obvious to see that
\begin{align}\label{}
 \frac{\hbar^{2}}{4m}\ddot{\varphi}-\beta_{0}\varphi-\hbar\lambda\varphi=g\varphi^{3}\leq 0, ~~x\in(\ell_{1},\ell_{2}).
\end{align}
By the maximum principle \cite{GT}, we have
\begin{align}\label{}
 \varphi(x,\beta_{0})<0,~~\forall x\in(\ell_{1},\ell_{2}),
\end{align}
which is a contradiction.

\textbf{Case 2.}
There exists a point $\beta_{0}$, an interval $(\ell_{1}, L] \subset[0,L]$, a point $\delta \in (0,1)$, and the solution  $\varphi(x,\beta_{0})$ satisfies
\begin{align}\label{351}
 \left\{
   \begin{array}{ll}
   \varphi(x,\beta_{0})=H(\varphi,\delta),~~\forall x\in (0, L]\\
   \varphi(x,\beta_{0})=0,~~\forall x\in (\ell_{1}, L],
   \\ \varphi(x,\beta_{0})\geq 0,~~\forall x\in [0,\ell_{1}),
\end{array}
\right.
\end{align}
which is shown in the Figure 5
\begin{figure}[H]
  \centering
  % Requires \usepackage{graphicx}
  \includegraphics[width=6cm]{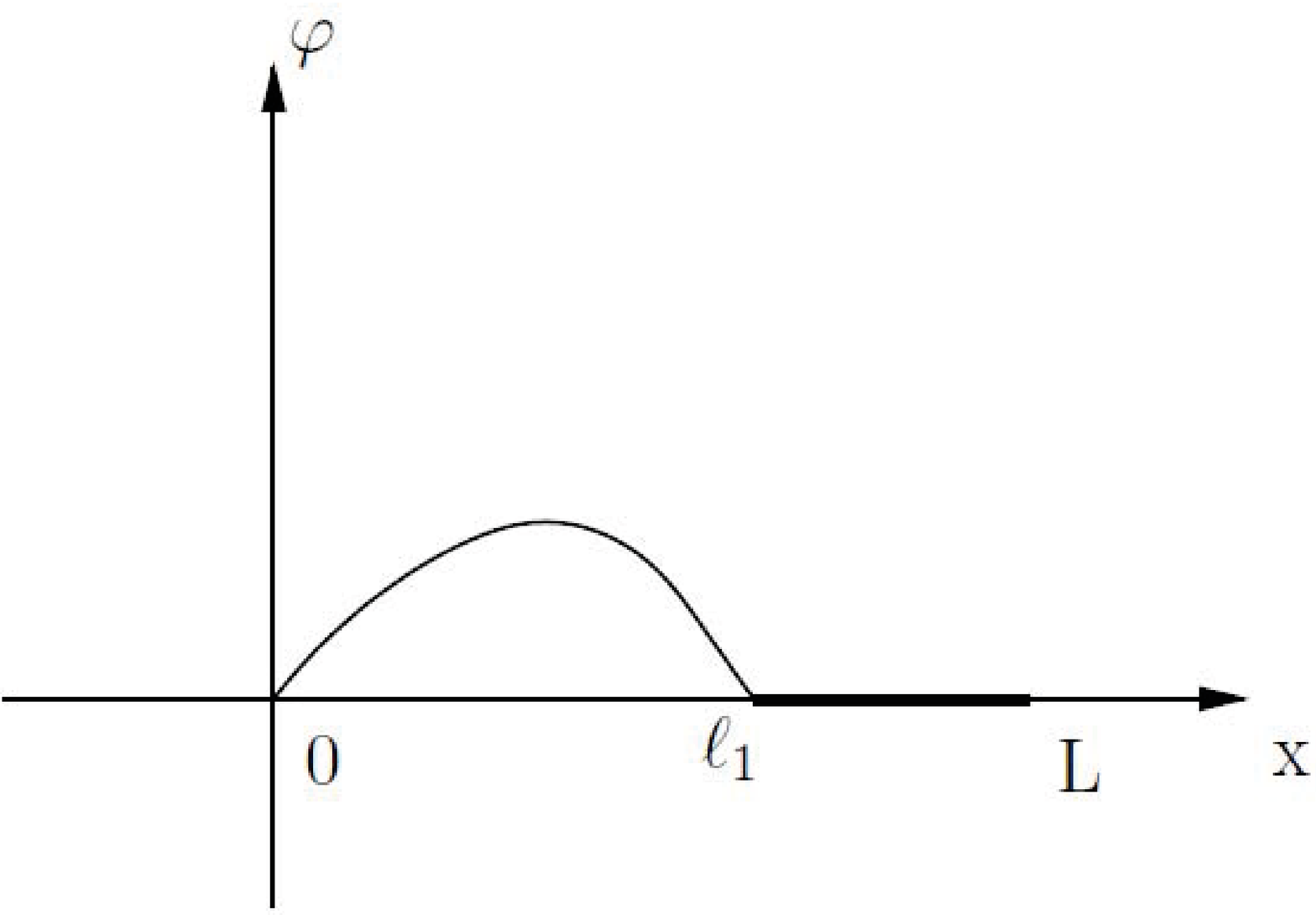}
  \caption{Figure of (\ref{351}).}\label{}
\end{figure}
Then we have
\begin{align}\label{}
 \frac{\hbar^{2}}{4m}\ddot{\varphi}-\beta_{0}\varphi-\hbar\lambda\varphi=g\varphi^{3}\geq 0, ~~\forall x\in [0,L].
\end{align}
From the maximum principle \cite{GT} , we have
\begin{align}\label{}
 \varphi(x,\beta_{0})> 0,~~\forall x\in [0,L],
\end{align}
which is a contradiction.

\textbf{Case 3.}
There exits an interval $(\ell,L]\subset[0,L]$, and a continuous function $\delta=\delta(\beta),~0<\delta<1 $, such that
\begin{align}\label{354}
 \left\{
   \begin{array}{ll}
   \varphi(x,\beta)=H(\varphi,\delta(\beta) ),\\
   \varphi(x,\beta)\leq 0,~~\forall x \in (\ell, L].
   \\ \varphi(x,\beta)\geq 0,~~\forall x\in [0,\ell).\\
   \ell \rightarrow L,~~as ~\varphi_{2}\rightarrow \varphi_{1}.\\
   \varphi(x,\beta)\rightarrow 0 ~~as ~\varphi_{2} \rightarrow \varphi_{1},x \in (\ell, L],
\end{array}
\right.
\end{align}
which is shown in the following Figure 6:
\begin{figure}[H]
  \centering
  % Requires \usepackage{graphicx}
  \includegraphics[width=7cm]{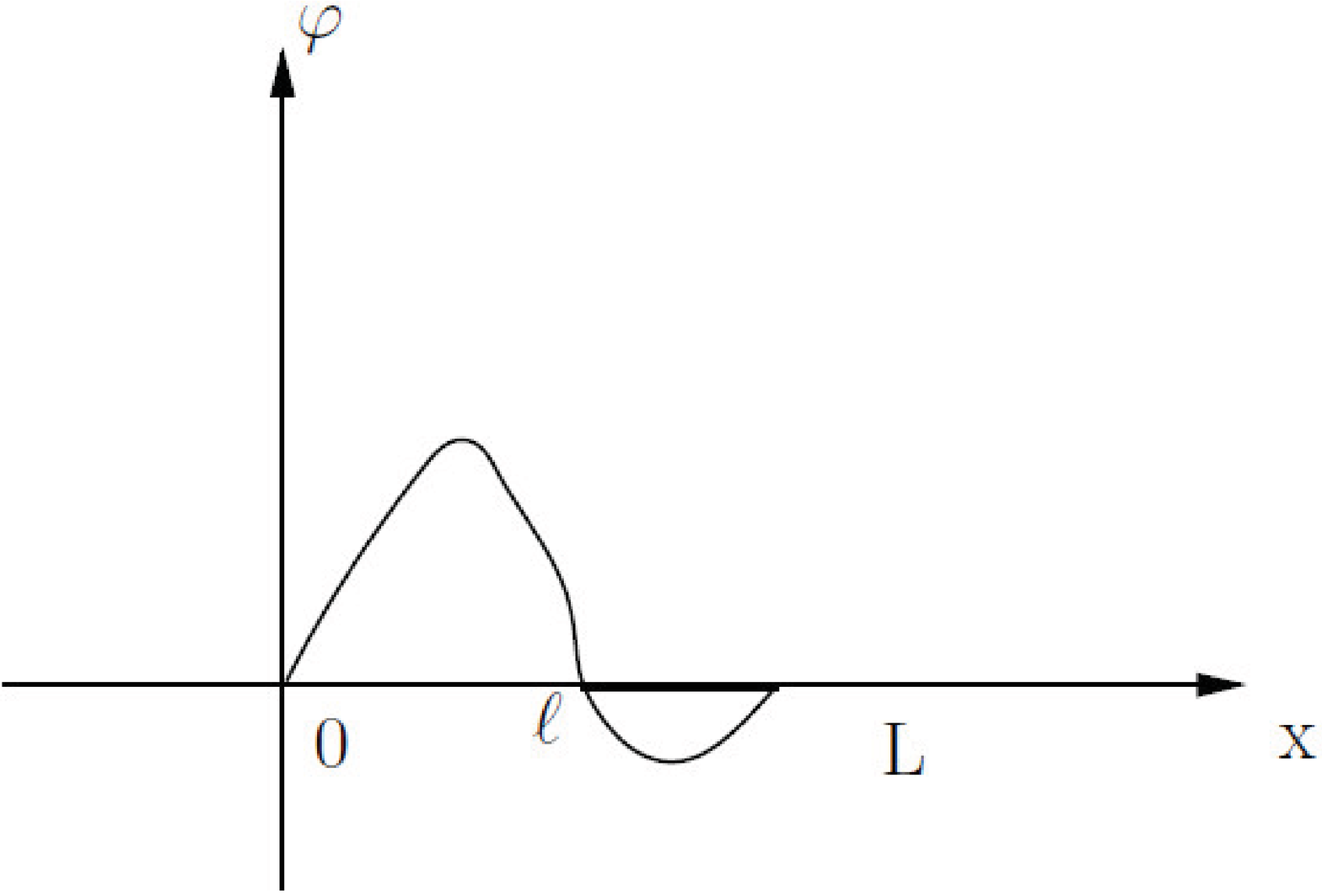}
  \caption{Figure of (\ref{354}).}\label{}
\end{figure}
Let $ \Lambda=(\ell,L] $. Consider the equation $(\ref{305})$ depending on $\Lambda$,
\begin{align}\label{}
\left\{
\begin{aligned}
&-\frac{\hbar^{2}}{4m}\Delta\varphi+(\beta-\hbar\lambda)\varphi+g\varphi^{3}=0,~~x\in \Lambda.\\
&\varphi=0,~~on~~\partial \Lambda.
\end{aligned}
\right.
\end{align}
Then we have
\begin{align}\label{}
&-\frac{\hbar^{2}}{4m}\Delta\varphi-\hbar\lambda\varphi+g\varphi^{3}=-\beta\varphi,~~x\in \Lambda.
\end{align}
By the application of Poincare inequality, it is easy to see that
\begin{align}\label{}
\begin{aligned}
&-\beta \parallel\varphi\parallel_{L^{2}}^{2}=<-\beta \varphi,\varphi>_{L^{2}}\\
&=<-\frac{\hbar^{2}}{4m}\Delta\varphi-\hbar\lambda\varphi+g\varphi^{3},\varphi>_{L^{2}}\\
&=\int_{\Lambda}\bigg[\frac{\hbar^{2}}{4m}(\nabla\varphi)^{2}-\hbar\lambda\varphi^{2}+g\varphi^{4}\bigg] dx\\
&\geq\frac{C}{L-\ell}\int_{\Lambda}\varphi^{2}dx.
\end{aligned}
\end{align}
It is shown that
\begin{align}\label{}
-\beta \geq \frac{C}{L-\ell}\rightarrow \infty~~as~~\varphi_{2} \rightarrow \varphi_{1},
\end{align}
which  contradicts against $\beta<\infty$.  The proof of (1) is complete.

(2) First, we prove that  there doesn't exist a point $\beta_{0}\in \mathbb{R}$, such that
\begin{align}\label{eq1}
\varphi(x,\beta)\rightarrow\infty, ~as~\beta\rightarrow\beta_{0}
\end{align}
for any bifurcation solution $\varphi(x,\beta)$ of $(\ref{305})$.

Suppose on the contrary that there exits a point $\beta_{0}\in \mathbb{R}$ and a bifurcation solution $\varphi(x,\beta)$, such that
\begin{align}\label{}
\varphi(x,\beta)\rightarrow\infty, ~as~\beta\rightarrow\beta_{0}.
\end{align}
Let $I=(\beta_{0}-\delta, \beta_{0}+\delta )$ be a neighborhood of the point $\beta_{0}$, and
\begin{align}\label{}
Q(\varphi,\beta)=-\frac{\hbar^{2}}{4m}\Delta\varphi+\beta\varphi-\hbar\lambda\varphi+g\varphi^{3}.
\end{align}
Then we have
\begin{align}\label{337}
\begin{aligned}
&<Q(\varphi,\beta),\varphi>_{L^{2}}\\
&=<-\frac{\hbar^{2}}{4m}\Delta\varphi+\beta\varphi-\hbar\lambda\varphi+g\varphi^{3},\varphi>_{L^{2}}\\
&=\int_{I}\bigg[\frac{\hbar^{2}}{4m}(\nabla\varphi)^{2}
+(\beta-\hbar\lambda)\varphi^{2}+g\varphi^{4}\bigg]dx\\
&\geq\int_{I}[(\beta-\hbar\lambda)\varphi^{2}+g\varphi^{4}]dx\rightarrow\infty,~~as~|\varphi| \rightarrow\infty.
\end{aligned}
\end{align}
Since $\varphi$ is a solution of $(\ref{305})$, we have $Q(\varphi,\beta)=0$, i.e.,
\begin{align}\label{338}
<Q(\varphi,\beta),\varphi>_{L^{2}}=0,
\end{align}
which contradicts against (\ref{337}). So, (\ref{eq1}) holds.

Combining conclusion (1), (\ref{eq1}) and the global Rabinowitz bifurcation theorem in \cite{RP}, it is easy to see that $\varphi(\beta)$ is continuous dependence on $\beta\rightarrow -\infty$. The proof is complete.

%(3) Suppose on the contrary that there exits a point $\beta^{*}\in R$ and a bifurcation solution $\varphi_{k}(x,\beta)$ bifurcating from the bifurcation point $\beta_{k}$, such that  $\varphi_{k}(x,\beta)$ is not defined on $(-\infty, \beta^{*})$. Based on the conclusions of (1) and (2), the graph of $\varphi_{k}(x,\beta)$ is simply  shown as follows,
%\begin{figure}[H]
%  \centering
  % Requires \usepackage{graphicx}
%  \includegraphics[width=7cm]{1}
%  \caption{Figure of (3)}\label{}
%\end{figure}
%Due to the figure 6, it is obviously to know that $\varphi_{k}(x,0)$ is a nontrivial solution of $(\ref{305})$, which is contradicted against the fact that $(\ref{305})$ has no nontrivial solution at $\beta=0$. The proof of (3)is complete.
\end{proof}

%\begin{remark}
%It is easy to check that the conclusions in Theorem 3.2 not only hold true for $\Omega\in R^{1}$, but also for $\Omega\subset R^{n},~ n=1,2$.
%\end{remark}

\section{Physical conclusions}

From the Theorem 4.4, we have  the following two properties of the scalar BEC:
\begin{itemize}
\item There is a series of quantum critical points $\{\beta_{k}\}_{k=1}^{\infty}$ in $(\ref{305})$;

\item The number of quantum states in $(\ref{305})$ increases as $\beta_{k}\rightarrow -\infty$. Specifically, if $\beta_{k+1}<\beta<\beta_{k}$, then there exit at least $2k$ ($k\geq 1$) nontrivial quantum states in BEC system $(\ref{305})$, see Figure 7.
\end{itemize}

%As the control parameter $\beta$ pass across more and more quantum critical points in (1) with the increase of $k$, more and more nontrivial quantum states may appear in $(\ref{305})$. Exactly , if $\beta_{k+1}<\beta<\beta_{k}$, there exit at least $2k$ nontrivial quantum states may appear in $(\ref{305})$.

%The conclusion (1) follows from Theorem 3.1.
%The conclusion (2) follows from the conclusions in Theorem 3.2, the bifurcation solution $\varphi_{k}(x,\beta)$ is defined on the whole region $\beta\in(-\infty,\beta_{k}]$, while it does not intersect any other bifurcation solutions. The graph of the bifurcation solutions are simply shown:
\begin{figure}[H]
  \centering
  % Requires \usepackage{graphicx}
  \includegraphics[width=8cm]{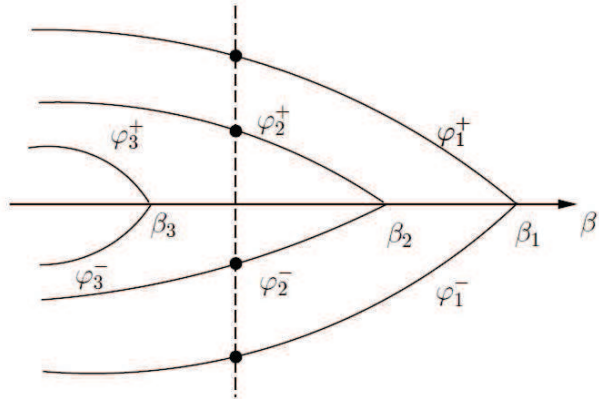}
  \caption{QPT for BEC}\label{}
\end{figure}
%The conclusion (2) follows from the figure 7. It should be noticed that some new bifurcation solutions may bifurcate from the nontrivial solution $\varphi_{k}(x,\beta)$ by adjusting the parameter $\beta$.

%Due to the discussion on BEC, hereafter we directly give some physical conclusions of the general QPT systems.
%\begin{description}
%\item~(1)~There is a series of critical points in the general physical system (\ref{2006});
%\item~(2)~The new quantum states bifurcate from the critical points, and they appear in the physical system;
%\item~(3)~As the control parameter $g$ pass across more and more critical points, more and more new quantum states may appear in the system (\ref{2006}), which can also be observed by physical experiments.
%\end{description}

Moreover,  we summarize the following general properties of the QPT.

\begin{itemize}
  \item The QPT system is characterized by the QPT model (\ref{2006}), and there is a series of quantum critical points in the QPT systems, such quantum critical points are bifurcation points of (\ref{2006}). The new quantum states appear around the quantum critical points, which are the bifurcation solutions of (\ref{2006});
  \item The number of quantum states  increases as the control parameter be adjusted in (\ref{2006}), specifically, we may predict that the the number of quantum states in (\ref{2006}) become more and more as $\beta_{k}\rightarrow -\infty$.
\end{itemize}

In addition, these properties will play an important role in understanding the QPT and interpreting the corresponding experiments. The important point is that these general properties will make some important predictions which can be verified by the physical experiments.

 {\footnotesize

\end{document}